\newtheorem{theorem}{Theorem}[section]
\newtheorem{lemma}[theorem]{Lemma}
\def\R{\mathbb R}
\def\Om{\Omega}
\def\ge{\geq}
\def\de{\delta}
\def\De{\Delta} 
\def\na{\nabla}
\def\pa{\partial}
\def\lt{\left}
\def\rt{\right}
\def\sp{\R^n}
\def\i0i{\int_0^\infty}
\def\leqs{<}
\def\geqs{>}
\numberwithin{equation}{section}
\newcounter{hypo}
\title[ Liouville type theorems]{A  Liouville type theorem for  fractional elliptic equation with exponential nonlinearity}
\author[A. T. Duong]{Anh Tuan Duong}
\address{Anh Tuan Duong\\Department of Mathematics, Hanoi National University of Education\\136 Xuan Thuy Street, Cau Giay District, Ha Noi, Viet Nam}
\email{tuanda@hnue.edu.vn}
\author[V.H.Nguyen]{Van Hoang Nguyen}
\address{Van Hoang Nguyen\\Institute of Mathematics, Vietnam Academy of Science and Technology, Ha Noi, Viet Nam}
\email{vanhoang0610@yahoo.com and nvhoang@math.ac.vn}
\subjclass{Primary: 35B53, 35J60; Secondary: 35B35.}
 \keywords{Liouville type theorems, Stable solutions, fractional elliptic equations, exponential nonlinearity}
   \date{\bf \today \, at \currenttime }
\begin{document}

\begin{abstract}In this paper, we are concerned with stable solutions to the fractional elliptic equation
	$$
	(-\Delta)^s    u=e^u\mbox{ in }\mathbb R^{N},
	$$
	where  $(-\Delta)^s$ is the fractional Laplacian with  $0<s<1$.  We establish the nonexistence of stable solutions provided that $N<10s$. This result is optimal when $s\uparrow 1$.
\end{abstract}
\maketitle
\section{Introduction}
In this paper, we are interested in the classification of stable  solutions of the fractional Gelfand problem
\begin{equation} \label{e200218}
(-\Delta)^s    u=e^u \mbox{ in }\mathbb R^N
\end{equation}
where $0<s<1$.  Here, the fractional Laplacian $(-\Delta)^s$ is defined as a nonlocal pseudodifferential operator on the Schwartz space of rapidly decreasing functions by
$$(-\Delta)^su(x)=c_{N,s}\lim_{\varepsilon\to 0}\int_{\mathbb{R}^N\setminus B(x,\varepsilon)}\frac{u(x)-u(\xi)}{|x-\xi|^{N+2s}}d\xi,$$
where $c_{N,s}$ is the normalization constant and 
\[
B(x,\varepsilon) = \{y\in \R^N\, ;\, |y-x|\leqs \varepsilon\}.
\]
This operator  is extended  in the distributional sense to  the space $\mathcal L_s(\mathbb R^N)$  with
$$\mathcal L_s(\mathbb{R}^N)=\left\{u\in L_{\rm loc}^1(\mathbb R^N); \int_{\mathbb R^N}\frac{|u(x)|}{(1+|x|)^{N+2s}}dx<\infty\right\}.$$
In addition,  if  $u\in C^{2\sigma}(\mathbb R^N)\cap \mathcal L_s(\mathbb R^N)$ for some  $\sigma>s$, then $(-\Delta)^su(x)$ is well-defined at every point $x\in\mathbb R^N$ see e.g., \cite{MRS16}. In this paper, we mean a  solution $u$ to \eqref{e200218}  by $u\in C^{2\sigma}(\mathbb R^N)\cap \mathcal L_s(\mathbb R^N)$ for some  $\sigma>s$  which satisfies \eqref{e200218} point-wise.

 In recent years, the classification of stable solutions to elliptic equations involving the Laplace operator has been much studied by many mathematicians. The pioneering work in this direction is due to A. Farina \cite{Far07} where the nonexistence of stable solutions to the Lane-Emden equation was completely established.  After that, there have been  many contributions to the classification of stable solutions to elliptic equations/ systems in various cases of nonlinearities \cite{Far07_1,FG14,CF12,Cow13,Cow16,HZ16,Hu15,Hu18,HHM16,DDF11,DNN17}.
 
  In contrast to the local case $s=1$, there has been only a few works dealing with the classification of stable solutions to elliptic equations involving the fractional Laplacian, see \cite{DDW17} and also \cite{FW16,FW17}. In these articles, the authors classified stable solutions, finite Morse index solutions to the fractional Lane-Emden equations by using the Caffarelli-Silvestre extension \cite{CS07}, some nonlinear integral estimates and the monotonicity  formula to overcome the difficulty caused by the presence of the fractional operator. However, the techniques in \cite{DDW17}  do not seem to be directly applicable to the fractional elliptic equation with exponential nonlinearity (known as the Gelfand nonlinearity). 
  
  Recall that in \cite{Far07_1}, the nonexistence of stable solutions to the Gelfand equation 
  \begin{equation}
\label{eq:Gelfand}-\Delta u=e^u \mbox{ in }\R^N
  \end{equation}
 was proved when $1\leq N\leq 9$. This condition is also shown to be sharp in the sense that when $N\geq 10,$ \eqref{eq:Gelfand} admits a radial stable solution \cite{Far07_1}. The purpose of this paper is then to study whether similar results hold true to the fractional Gelfand  equation \eqref{e200218}. Before stating our main result, we recall that a  solution $u$ of  \eqref{e200218}
 is called stable if  
\begin{equation}\label{e08123}
\int_{\mathbb R^n}e^u\phi^2dx\leq \frac{c_{N,s}}{2}\int_{\mathbb R^n}\int_{\mathbb R^n}\frac{(\phi(x)-\phi(y))^2}{|x-y|^{N+2s}}dxdy,\mbox{ for all }\phi\in C^1_c(\mathbb R^N). 
\end{equation}
Let us state  the main result in this paper.
\begin{theorem}\label{t1}
	For $N<10s$,  there is no stable solution of \eqref{e200218}.
	\end{theorem}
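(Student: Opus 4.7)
The plan is to adapt A. Farina's classical test-function method to the nonlocal setting by means of the Caffarelli–Silvestre extension, which turns $(-\Delta)^s$ into a local degenerate elliptic operator on $\R^{N+1}_+$ with the weight $y^{1-2s}$. Letting $U\colon\R^{N+1}_+\to\R$ be the $s$-harmonic extension of $u$, one has $\mathrm{div}(y^{1-2s}\nabla U)=0$ in $\R^{N+1}_+$, $U(\cdot,0)=u$, and $-\kappa_s\lim_{y\to 0^+}y^{1-2s}\partial_y U=e^u$ on $\R^N$ for an explicit constant $\kappa_s=\kappa_{N,s}>0$. The stability condition \eqref{e08123} is then equivalent to
\[
\int_{\R^N}e^u\phi^2\,dx\;\le\;\kappa_s\int_{\R^{N+1}_+}y^{1-2s}|\nabla\Phi|^2\,dxdy
\qquad\text{for all } \Phi\in C^1_c(\overline{\R^{N+1}_+}),\ \phi=\Phi|_{y=0}.
\]

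For a parameter $\alpha\in(0,4)$ and a cut-off $\eta\in C^1_c(\overline{\R^{N+1}_+})$ to be chosen later, I would multiply the weak form of the extended equation by $e^{\alpha U}\eta^2$ (whose trace on $\{y=0\}$ gives the boundary term $\int e^{(\alpha+1)u}\eta^2$) and insert $\Phi=e^{\alpha U/2}\eta$ (with trace $e^{\alpha u/2}\eta|_{y=0}$) into the stability inequality. Writing
\[
A=\int y^{1-2s}|\nabla U|^2 e^{\alpha U}\eta^2,\quad B=\int y^{1-2s}e^{\alpha U}\eta\,\nabla U\!\cdot\!\nabla\eta,\quad C=\int y^{1-2s}e^{\alpha U}|\nabla\eta|^2,
\]
the two identities read $\int e^{(\alpha+1)u}\eta^2|_{y=0}=\kappa_s(\alpha A+2B)$ and $\int e^{(\alpha+1)u}\eta^2|_{y=0}\le\kappa_s\bigl(\tfrac{\alpha^2}{4}A+\alpha B+C\bigr)$. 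Subtracting yields $\tfrac{\alpha(4-\alpha)}{4}A+(2-\alpha)B\le C$, and combining with the Cauchy–Schwarz bound $|B|\le\sqrt{AC}$ (resolving a quadratic in $\sqrt{A/C}$) gives $A\le K_1(\alpha)C$. Substituting back produces the core a priori estimate
\[
\int_{\R^N}e^{(\alpha+1)u}\eta^2(\cdot,0)\,dx\;\le\;K(\alpha)\int_{\R^{N+1}_+}y^{1-2s}e^{\alpha U}|\nabla\eta|^2\,dxdy, \qquad \alpha\in(0,4).
\]

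The final step is a scaling argument. Replacing $\eta$ by $\eta^{\alpha+1}$, applying Hölder with exponents $\tfrac{\alpha+1}{\alpha}$ and $\alpha+1$, and using the Poisson representation $U(x,y)=(P_y\!\ast\! u)(x)$ together with Jensen's inequality $e^{(\alpha+1)U(x,y)}\le (P_y\!\ast\! e^{(\alpha+1)u})(x)$ allows one to convert the weighted bulk right-hand side into a boundary quantity $\int_{B_{CR}}e^{(\alpha+1)u}$. Choosing $\eta$ with $\eta=1$ on $B_R^+$, $\eta=0$ outside $B_{2R}^+$, and $|\nabla\eta|\le C/R$, the tracking of scales produces
\[
\int_{B_R}e^{(\alpha+1)u}\,dx\;\le\;C\, R^{\,N-2s(\alpha+1)}, \qquad \alpha\in(0,4).
\]
When $N<10s$ the interval $\bigl(N/(2s)-1,\,4\bigr)$ is non-empty, so one can choose an admissible $\alpha$ for which the exponent $N-2s(\alpha+1)$ is strictly negative; letting $R\to\infty$ forces $e^{(\alpha+1)u}\equiv 0$, the desired contradiction. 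The Farina-type algebra of the core step is essentially dimension-free and delivers the usable range $\alpha\in(0,4)$ with little effort; the main technical difficulty is expected in the scaling step, namely translating $\int y^{1-2s}e^{\alpha U}|\nabla\eta|^2$ back into a boundary integral with the sharp scaling $R^{N-2s(\alpha+1)}$ while controlling the Poisson tail coming from $|z|\gg R$, closing the resulting iteration, and justifying that $e^{\alpha U}\eta^2$ is an admissible test function at all (likely by a truncation and approximation argument on $U$).
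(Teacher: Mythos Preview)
Your overall architecture is the paper's: Caffarelli--Silvestre extension, Farina's pairing of $e^{\alpha U}\eta^2$ against the equation with $e^{\alpha U/2}\eta$ in the stability inequality (your $\alpha\in(0,4)$ is the paper's $2\alpha\in(0,4)$), Jensen on the Poisson representation to push the bulk term back to the boundary, and a scaling argument to reach $N<10s$.

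The difficulty you flag at the end is real, and it is precisely where the paper departs from your sketch. With a compactly supported cutoff the Jensen step does \emph{not} return $\int_{B_{CR}}e^{(\alpha+1)u}$: the kernel
\[
z\longmapsto\int_{B_{2R}^+}y^{1-2s}\,P_s(x-z,y)\,|\nabla\eta|^2\,dx\,dy
\]
is not supported in a ball but carries a tail of order $R^{N}|z|^{-(N+2s)}$ for $|z|\gg R$, so one inherits an uncontrolled remainder $R^{N}\int_{|z|>CR}e^{\alpha u}|z|^{-(N+2s)}\,dz$. Neither continuity of $u$ nor $u\in\mathcal L_s(\R^N)$ bounds this for $\alpha$ near $4$, and your H\"older/doubling iteration cannot start without such a bound.

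The paper's device is to abandon compact cutoffs altogether and take as test weight $\eta(x)=(1+|x|^2)^{-(N+2s)/2}$, extended to the half-space as $\zeta(x,t)=(1+|x|^2+t^2)^{-(N+2s)/4}$. This decay is tuned to the Poisson tail, so the Jensen computation yields the clean inequality $\int_{\R^N} e^{(2\alpha+1)u}\eta\le C\int_{\R^N} e^{2\alpha u}\eta$ with no remainder. The bootstrap is launched from the separate fact $\int_{\R^N} e^{u}\eta<\infty$, obtained by testing \eqref{e200218} against $\eta$ itself (legitimized via an approximation lemma showing $|(-\Delta)^s(\eta\phi_R)|\le C\eta$ uniformly in $R$) together with $u\in\mathcal L_s(\R^N)$. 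The final $R$-dependence comes not from rescaling the cutoff but from applying the resulting uniform bound $\int_{\R^N} e^{(2\alpha+1)u}\eta\le C$ to the rescaled stable solution $u_R(x)=u(Rx)+2s\ln R$.
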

	As mentioned above, the Gelfand equation \eqref{eq:Gelfand} admits a stable solution when $N \geq 10$. So, our Theorem \ref{t1} is optimal in the limit $s\uparrow 1$.
	
To prove Theorem \ref{t1}, we do not use monotonicity formula as in \cite{DDW17} since it  seems not applicable due to the exponential nonlinearity. Instead, we use the Caffarelli-Silvestre extension \cite{CS07}, establish a key estimate  (see Lemma \ref{l1311191}) and develop the idea in \cite{Far07_1}. It is worth noticing that  some difficulties arise due to the presence of the fractional Laplacian, especially in the choice of test functions in $C_c^\infty(\R^N)$ to make some integral estimates for the solution. We overcome this difficulty by showing that in fact we can test the equation \eqref{e200218} by a special function $\eta(x) = (1 + |x|^2)^{-\frac{N+2s}2}$ (which does not belong to $C_c^\infty(\R^N)$). This fact will be done by some technical lemmas in Section 2. Note that the choice of function $\eta$ is a crucial technique in the present paper. In forthcoming paper \cite{TuanHoangLE}, the authors develop this approach to prove some Liouville results for the stable solutions of the equation $(-\Delta)^s u = f(u)$ and of the fractional Lane-Emden system.

Let us close the introduction by recalling a standard tool to study the nonlocal problems due to  Caffarelli and Silvestre \cite{CS07}. This result allows us to reduce a nonlocal problem  in $\mathbb R^N$ to a local problem with a nonlinear Neumann boundary condition on $\mathbb R^{N+1}_+$.

\noindent{\bf Theorem A. }{\it Let $0<s<\sigma<1$ and $u\in C^{2\sigma}(\mathbb R^N)\cap \mathcal L_s(\mathbb R^N)$. For $(x,t)\in\mathbb R^{N+1}_+$, we define
	$$U(x,t)=\int_{\mathbb R^N}P_s(x-z,t)u(z)dz,$$
	where $P_s(x,t)$ is the Poisson kernel 
	$$P_s(x,t)=p(N,s)\frac{t^{2s}}{(|x|^2+t^2)^{\frac{N+2s}{2}}}$$
	and $p(N,s)$ is the normalization constant. Then $U\in C^2(\mathbb R^{N+1}_+)\cap C(\overline{\mathbb R^{N+1}_+})$, $t^{1-2s}\partial_t U\in C(\overline{\mathbb R^{N+1}_+})$ and 
	\begin{equation}\label{e1510191}
	\begin{cases}
	-{\rm div}(t^{1-2s}\nabla U)=0&\mbox{ in }\mathbb R^{N+1}_+\\
	U=u&\mbox{ on }\partial\mathbb R^{N+1}_+\\
	-\lim\limits_{t\to 0}t^{1-2s}\partial _t U=\kappa_s(-\Delta)^su&\mbox{ on }\partial\mathbb R^{N+1}_+
	\end{cases}.
	\end{equation}
	Here $\kappa_s=\frac{\Gamma(1-s)}{2^{2s-1}\Gamma(s)}$  and $\Gamma$ is the usual Gamma function.
}

The rest of this paper is devoted to the proof of Theorem \ref{t1}.
\section{Proof of Theorem \ref{t1} }
In this section, we give the proof of Theorem \ref{t1}.  Let us first fix some terminologies.
In what follows, we denote by $C$ a generic positive constant which may change from line to line. Denote also by 
\begin{equation}\label{e2610191}
\eta(x)=(1+|x|^2)^{-\frac{N+2s}{2}},\quad x\in \R^N
\end{equation}
which  plays the role of test function  in the sequel.
We first give an elementary property of $\eta$.
\begin{lemma}\label{l2110191}
Let $\eta$ be defined in \eqref{e2610191}. There holds
\begin{equation}\label{e21101910}
|(-\Delta)^s\eta(x)|\leq C\eta(x),\quad \text{\rm for all }\, x\in \R^N
\end{equation}
where $C$ depends only on $N$ and $s$.
\end{lemma}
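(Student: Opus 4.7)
I would split the analysis into two regimes: the bounded region $|x|\le 2$ and the exterior $|x|\ge 2$. On the bounded region, $\eta$ is smooth and lies in the Schwartz class, and is bounded below by a positive constant $c_0>0$. Since $\eta\in\mathcal L_s(\R^N)\cap C^\infty(\R^N)$, the fractional Laplacian $(-\Delta)^s\eta$ is continuous on $\R^N$, hence uniformly bounded on the compact set $\{|x|\le 2\}$. This immediately yields $|(-\Delta)^s\eta(x)|\le C\le(C/c_0)\eta(x)$ there.

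For $|x|\ge 2$ the goal is to prove the pointwise decay $|(-\Delta)^s\eta(x)|\le C|x|^{-(N+2s)}$, which is of the same order as $\eta(x)$ in this regime. Starting from the singular integral representation
$$(-\Delta)^s\eta(x)=c_{N,s}\,\mathrm{P.V.}\int_{\R^N}\frac{\eta(x)-\eta(y)}{|x-y|^{N+2s}}\,dy,$$
I would decompose $\R^N$ into $A_1=B(x,|x|/2)$, $A_2=B(0,2|x|)\setminus A_1$, and $A_3=\R^N\setminus B(0,2|x|)$, and estimate the integral over each piece. On $A_1$ the first-order term in the Taylor expansion of $\eta$ around $x$ vanishes by antipodal symmetry in the principal value, leaving a contribution bounded by $\|D^2\eta\|_{L^\infty(A_1)}\int_{A_1}|x-y|^{2-N-2s}dy$; direct differentiation of $(1+|x|^2)^{-(N+2s)/2}$ gives $|D^2\eta(y)|\le C(1+|y|^2)^{-(N+2s+2)/2}$, and since $|y|\ge|x|/2$ on $A_1$, this contribution is of order $|x|^{-(N+4s)}$. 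On $A_2$, using $|x-y|\ge|x|/2$ and pulling the kernel out, the integral is bounded by $C|x|^{-(N+2s)}(\eta(x)|A_2|+\|\eta\|_{L^1(\R^N)})=O(|x|^{-(N+2s)})$, where integrability of $\eta$ follows from $N+2s>N$. On $A_3$, using $|x-y|\ge|y|/2$ and the polynomial decay $\eta(y)\le C|y|^{-(N+2s)}$ splits the estimate into $\eta(x)\int_{A_3}|y|^{-(N+2s)}dy\le C\eta(x)|x|^{-2s}$ and $\int_{A_3}\eta(y)|y|^{-(N+2s)}dy\le C|x|^{-(N+4s)}$, both of which are $O(|x|^{-(N+2s)})$.

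The principal technical step is the $A_1$ estimate: one must carefully formulate the cancellation of the odd Taylor remainder in the principal value (rather than handling it as a convergent absolute integral, which it is not) and then control $|D^2\eta|$ uniformly on $B(x,|x|/2)$ from the explicit formula for $\eta$. The treatment of $A_2$ and $A_3$ is routine book-keeping using the crude inequalities $|x-y|\ge|x|/2$ and $|x-y|\ge|y|/2$ together with the $L^1$-integrability and $|x|^{-(N+2s)}$ decay of $\eta$.
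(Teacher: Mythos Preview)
Your proposal is correct and follows essentially the same strategy as the paper: a bounded/unbounded split, and in the unbounded region a three-piece decomposition (near-field handled by second-order Taylor with first-order cancellation, middle and far pieces by crude kernel bounds and the $L^1$-integrability/decay of $\eta$). The only differences are cosmetic: the paper writes the near-field correction directly into the integrand rather than invoking antipodal symmetry, takes all three annuli centered at $x$ (in the variable $h=y-x$) rather than mixing centers at $x$ and $0$, and handles the bounded region $|x|\le 1$ by an explicit computation instead of a continuity argument.
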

\begin{proof}
Denote by $\rho_m=(1+|x|^2)^{-\frac{m}{2}}$ with $m>N$. Then $\rho_{N+2s}=\eta$. The estimate \eqref{e21101910}  is a direct consequence of the following
\begin{equation}\label{eq:boundrho}
|(-\Delta)^s\rho_m(x)|\leq C\eta(x)\mbox{ for all }x\in\mathbb R^N,
\end{equation}
where $C$ depends only on $m,N$ and $s$. Hence,  it is sufficient to prove \eqref{eq:boundrho}. 

Let $R$ be a positive constant and define
\[
f(x_,h) = \frac{\rho_m(x) -\rho_m(x+h)}{|h|^{N +2s}} \chi_{\{|h| > R\}}+ \frac{\rho_m(x) -\rho_m(x+h)+ \na \rho_m(x) \cdot h}{|h|^{N +2s}}  \chi_{\{|h| \leq R\}},
\]
where $\chi_A$ is the characteristic function of the set $A$. Then, it is not hard to see that
\begin{equation*}
	(-\De)^s \rho_m(x) = c_{N,s}\int_{\mathbb R^N}f(x,h)dh.
\end{equation*}
	
A  straightforward computation gives
	\[
	\pa_{kl}^2 \rho_m(x) = -2m (1+|x|^2)^{-m-1}\de_{kl} + 4m(m+1) (1+ |x|^2)^{-m-2} x_k x_l,
	\]
	which implies
	\[
	\|\na^2 \rho_m(x)\|_{op} \leq 2m(2m+3) (1+|x|^2)^{-m-1},
	\] 
here for a $N\times N$ matrix $A$, $\|A\|_{op}$ denotes its operator norm, i.e. $\|A\|_{op} = \max\{|Ax|/|x|\, ;\, x \not= 0\}$. Hence, if  $|h| \leq R$ then 
	\begin{equation}\label{eq:est1}
	|f(x,h)| = \frac{| \int_0^1 \na^2 \rho_m (x + \theta h) h\cdot h (1-\theta) d\theta |}{|h|^{N+2s}} \leq m(2m+3) |h|^{-N+ 2 -2s}.
	\end{equation}
In addition, when $|h| > R$, there holds
	\begin{equation}\label{eq:est2}
	|f(x,h)| \leq \frac{\rho_m(x) + \rho_m(x +h)}{|h|^{N+2s}} \leq 2 |h|^{-N-2s}.
	\end{equation}
	 We first consider $|x| \leq 1$. We choose $R=2$ and define $f(x,h)$ as above. It results from  \eqref{eq:est1} and \eqref{eq:est2} that 
	\begin{align}\label{eq:inball}
	|(-\Delta)^s \rho_m(x)| \leq & c_{N,s} \int_{\{|h|\leq 2\}} |f(x,h)| dh + c_{N,s} \int_{\{|h|> 2\}} |f(x,h)| dh\notag\\
	&\leq m(2m+3) c_{N,s} \int_{\{|h|\leq 2\}} |h|^{-N+2-2s} dh + 2c_{N,s}\int_{\{|h|> 2\}} |h|^{-N-2s} dh\notag \\
	&= c_{N,s} |S^{N-1}| \lt(\frac{m(2m+3) 2^{1-2s}}{1-s}+ \frac{2^{-2s}}{s}\rt).
	\end{align}
	Here $|S^{N-1}|$ is the surface area of the unit sphere $S^{N-1}$ in $\mathbb{R}^N$. 
	
	For $|x| \geq 1$, we take $R= |x|/2$ and define $f(x,h)$ as above. We divide $\R^N$ into three subdomains as follows
	\[
	\Om_1 =\{h\, ;\, |h| < |x|/2\}, \, \Om_2 = \{h\, ;\,|x|/2 \leq |h| < 2|x|\}, \, \Om_3 =\{h\, ;\, |h| \geq 2|x|\}.
	\]
	If  $h\in \Om_1$ then  $|x+ \theta h| \geq |x| -\theta |h| \geq |x|/2$ for any $\theta \in (0,1)$ which implies 
	\[
	\|\na^2 \rho_m(x+\theta h)\|_{op} \leq 2m(2m+3) \Big(1+ \frac{|x|^2}4\Big)^{-m-1} \leq 2^{2m+3}m(2m+3) (1+ |x|^2)^{-m -1}.
	\]
	Hence $|f(x,h)| \leq 2^{2m+2}m(2m+3)(1+ |x|^2)^{-m -1} |h|^{-N+2 -2s}$ on $\Om_1$ which combines with $|x|\geq 1$ gives
	\begin{align}\label{eq:estOm1}
	|\int_{\Om_1} f(x,h) dh| &\leq \int_{\Om_1} |f(x,h)| dh \notag\\
	&\leq \frac{2^{2m-1+2s}}{1-s}m(2m+3) |S^{N-1}| (1+ |x|^2)^{-m -1} |x|^{-2s}\notag\\
	&\leq \frac{2^{2m-1+3s}}{1-s}m(2m+3) |S^{N-1}| (1+ |x|^2)^{-m -1-s}\notag\\
	&\leq \frac{2^{m+\frac{N}2 -2 +3s}}{1-s} m(2m+3) |S^{N-1}| (1+ |x|^2)^{-\frac N2 -s}.
	\end{align}

We next estimate the integral on the second subdomain $\Omega_2$,
	\begin{align}\label{eq:estOm2}
	|\int_{\Om_2} f(x,h) dh| &\leq \int_{\Om_2} |f(x,h)| dh \notag\\
	&\leq 2^{N+2s}|x|^{-N-2s} \lt((1+ |x|^2)^{-m} |x|^N |B_1| + \int_{\R^N} \rho_m dx\rt)\notag\\
	&\leq 2^{\frac32(N+2s)} \lt(|B_1| + \int_{\R^N} \rho_m(x) dx\rt) (1+ |x|^2)^{-\frac N2 -s},
	\end{align}
	where we  have used $|x|\geq 1$,  $\rho_m \in L^1(\R^N)$ and $|B_1|$ denotes the volume of the unit ball in $\mathbb{R}^N$.
	
	If $h\in \Om_3$ then  $|x+ h| \geq |h| - |x| \geq |x|$. Thus $0 \leq f(x,h) \leq (1+ |x|^2)^{-m}$ on  the third subdomain $\Om_3$. Consequently
	\begin{align}\label{eq:estOm3}
	|\int_{\Om_3} f(x,h) dh| &=\int_{\Om_3} f(x,h) dh \notag\\
	&\leq (1+ |x|^2)^{-m} \frac1{2(1-s)} |S^{N-1}| (2 |x|)^{-2s}\notag\\
	&\leq 2^{-m-s + \frac N2} |S^{N-1}| (1+ |x|^2)^{-\frac N2 -s}.
	\end{align}
	Here we have used again $|x|\geq 1$. Finally, the estimate \eqref{eq:boundrho} follows from \eqref{eq:inball}, \eqref{eq:estOm1}, \eqref{eq:estOm2} and \eqref{eq:estOm3}.
\end{proof}
Next, we prove an approximation lemma which enables us to use the function $\eta$ as a test function for the equation \eqref{e200218} (as mentioned in the introduction).
\begin{lemma}\label{Xapxi}
Let $\phi\in C_c^\infty(\mathbb R)$ be a test function satisfying $\phi(t)=1$ if $|t|\leq 1$ and $\phi(t)=0$ if $|t|>2$.  For $R \geqs 1$, we define $\phi_R(x)=\phi\left(\frac{|x|}{R}\right)$. Then there hold
\begin{equation}\label{e2110192}
\lim_{R\to \infty} (-\Delta )^s(\eta\phi_R)(x)\to (-\Delta )^s(\eta)(x)
	\end{equation}
 and 
	\begin{equation}\label{e2110193}
	|(-\Delta )^s(\eta\phi_R)(x)|\leq C(1+R^{-2s})\eta (x),
	\end{equation}
for any $x\in \R^N$. Here,  $C$ is independent of $R$.
\end{lemma}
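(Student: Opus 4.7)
My plan addresses the two assertions separately.

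For the pointwise convergence \eqref{e2110192}, I fix $x\in\R^N$ and take $R>2|x|$. Since $\phi_R\equiv 1$ on $B(0,R)$, the difference reduces to a non-singular integral,
\[
(-\De)^s(\eta\phi_R)(x) - (-\De)^s\eta(x) = c_{N,s}\int_{|y|\geq R}\frac{\eta(y)\bigl(1-\phi_R(y)\bigr)}{|x-y|^{N+2s}}\,dy.
\]
On $\{|y|\geq R\}$, where $R\geq 2|x|$, one has $|x-y|\geq |y|/2$ and $\eta(y)\leq |y|^{-N-2s}$, so the right-hand side is bounded by $C\int_{|y|\geq R}|y|^{-2(N+2s)}\,dy = C R^{-N-4s}\to 0$ as $R\to\infty$.

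For the uniform estimate \eqref{e2110193}, since $1+R^{-2s}\geq 1$ for $R\geq 1$, it suffices to show $|(-\De)^s(\eta\phi_R)(x)|\leq C\eta(x)$ with $C$ independent of $R$. The plan is to rerun the proof of Lemma \ref{l2110191} with $\psi_R:=\eta\phi_R$ replacing $\eta=\rho_{N+2s}$. That proof only uses the pointwise bound $\rho_m(y)\leq (1+|y|^2)^{-m/2}$, the $L^1(\R^N)$-integrability of $\rho_m$, and the Hessian estimate $\|\na^2\rho_m(y)\|_{op}\leq C(1+|y|^2)^{-m/2-1}$. The first two inputs transfer to $\psi_R$ immediately since $|\psi_R|\leq \|\phi\|_\infty\,\eta$. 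For the Hessian, one expands
\[
\na^2\psi_R = \phi_R\na^2\eta + \na\phi_R\otimes\na\eta + \na\eta\otimes\na\phi_R + \eta\,\na^2\phi_R;
\]
the first term is already controlled by Lemma \ref{l2110191}, while the remaining three are supported in the annulus $\{R\leq |y|\leq 2R\}$, on which $1/R\sim 1/|y|$. Combined with $|\na\phi_R|\leq C/R$ and $|\na^2\phi_R|\leq C/R^2$, this yields
\[
\|\na^2\psi_R(y)\|_{op}\leq C\,\eta(y)(1+|y|^2)^{-1}
\]
uniformly in $R\geq 1$. Once this is in hand, the partition of $\R^N$ into $\Om_1,\Om_2,\Om_3$ from Lemma \ref{l2110191} produces the desired bound.

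The main obstacle is precisely checking that the Hessian estimate for $\psi_R$ is uniform in $R$: the factors $1/R$ and $1/R^2$ arising from differentiation of $\phi_R$ must be reinterpreted as decay in $|y|$, using that $\na\phi_R$ and $\na^2\phi_R$ are supported where $|y|\sim R$. Alternatively, one may use the Leibniz-type identity
\[
(-\De)^s(\eta\phi_R) = \phi_R(-\De)^s\eta + \eta(-\De)^s\phi_R - c_{N,s}\!\int \frac{(\eta(x)-\eta(y))(\phi_R(x)-\phi_R(y))}{|x-y|^{N+2s}}\,dy,
\]
in which the rescaling $(-\De)^s\phi_R(x) = R^{-2s}(-\De)^s\phi_1(x/R)$ with $\phi_1(z):=\phi(|z|)$ makes the $R^{-2s}$ factor in \eqref{e2110193} appear explicitly; the remaining commutator integral can then be controlled by splitting according to whether $|x-y|\leq R$ or $|x-y|>R$.
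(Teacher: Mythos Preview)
Your argument is correct. For the pointwise limit \eqref{e2110192} your direct computation of the difference as a tail integral over $\{|y|\geq R\}$ is cleaner than what the paper does: the paper instead derives \eqref{e2110192} as a byproduct of the Leibniz identity \eqref{e2110198}, observing that for fixed $x$ and $R$ large enough one lands in Case~1, where the first and third terms carry explicit $R^{-2s}$ factors and $\phi_R(x)=1$ in the second term.

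For the uniform bound \eqref{e2110193} your primary route (Approach~A) is genuinely different from the paper's. The paper works with the product rule \eqref{e2110198} throughout: the first two terms are handled by scaling and by Lemma~\ref{l2110191} respectively, while the commutator integral $I(x)$ is estimated via a trichotomy on $|x|$ relative to $R$ (Cases~1--3), with a further three-region split in Case~3 using the mean value theorem on both $\eta$ and $\phi$. Your approach instead shows once and for all that $\psi_R=\eta\phi_R$ satisfies the \emph{same} pointwise, $L^1$, and Hessian bounds as $\rho_{N+2s}$ in Lemma~\ref{l2110191}, uniformly in $R\geq 1$, so that the entire proof of that lemma can be rerun verbatim. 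The crucial observation --- which you identify correctly --- is that the cross terms $\nabla\phi_R\otimes\nabla\eta$ and $\eta\,\nabla^2\phi_R$ live on the annulus $R\leq |y|\leq 2R$, where the factors $R^{-1}$ and $R^{-2}$ can be traded for $(1+|y|^2)^{-1/2}$ and $(1+|y|^2)^{-1}$ since $|y|\geq R\geq 1$. This route is more economical (no separate case analysis in $|x|$) and immediately gives the cleaner bound $|(-\Delta)^s(\eta\phi_R)|\leq C\eta$; the paper's approach, on the other hand, makes the $R^{-2s}$ contributions explicit term by term, which is what naturally feeds into their proof of \eqref{e2110192}. Your Approach~B sketch is essentially the paper's method, though note that the paper's splitting for the commutator is in $|x|$ (and then in $|y|$), not in $|x-y|$ as you suggest.
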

\begin{proof}
A simple computation gives
\begin{multline}\label{e2110198}
(-\Delta)^s(\eta\phi_R)(x)=\eta(-\Delta)^s\phi_R(x)+\phi_R (-\Delta)^s\eta(x)\\-c_{N,s}\int_{\mathbb R^N}\frac{(\eta(x)-\eta(y))(\phi_R(x)-\phi_R(y))}{|x-y|^{N+2s}}dy.
\end{multline}
It is easy to see that  the first term in the right hand side of \eqref{e2110198} satisfies
\begin{equation}\label{e2110194}
|\eta(x)(-\Delta)^s\phi_R(x)| =R^{-2s}\eta(x) |((-\Delta)^s\phi)(x/R)| \leq CR^{-2s}\eta(x),
\end{equation}
since $\phi \in C_c^\infty(\R^N)$. Concerning the second term, it follows from Lemma \ref{l2110191} that 
\begin{equation}\label{e2110195}
|\phi_R(x)(-\Delta)^s\eta(x)|\leq C\eta(x).
\end{equation}
We now control the integral in the  last term $$I(x):=\int_{\mathbb R^N}\frac{(\eta(x)-\eta(y))(\phi_R(x)-\phi_R(y))}{|x-y|^{N+2s}}dy$$
by dividing into three cases of $x$. 

{\bf Case 1. $|x|\leq \frac{R}{2}$}. 

In this case, we have 
\begin{equation*}
|I(x)|\leq 2\eta(x)\int_{|y|\geq R}\frac{|1-\phi_R(y)|}{|x-y|^{N+2s}}dy,
\end{equation*}
where we have used $\eta(y)\leq \eta(x)$  when $|y|\geq R.$
From a change of variable $x=R\tilde{x}$ and $y=R\tilde{y}$, we obtain 
\begin{equation*}
|I(x)|\leq 2R^{-2s}\eta(x)\int_{1\leq |\tilde{y}|\leq 2}\frac{|1-\phi(|\tilde{y}|)|}{|\tilde{x}-\tilde{y}|^{N+2s}}d\tilde{y}.
\end{equation*}
This and the fact that $|\tilde{x}-\tilde{y}|\geq 1/2$ follow that  
\begin{equation}\label{e2110197}
I(x)\leq CR^{-2s}\eta(x).
\end{equation}

{\bf Case 2. $|x|\geq 3R$.} 

In this case, one has
\begin{equation*}
|I(x)|\leq \int_{|y|\leq 2R}\frac{2\eta(y)\phi_R(y)}{|x-y|^{N+2s}}dy,
\end{equation*}
where we have used $\eta(x)\leq \eta(y)$  for $|y|\leq 2R.$
In addition, $|x-y|\geq \frac{|x|}{3}$. Then, 
$$I(x)\leq \left(\frac{|x|}{3}\right)^{-N-2s}\int_{|y|\leq 2R}|2\eta(y)\phi_R(y)|dy\leq C\eta(x).$$

{\bf Case 3. $\frac{R}{2}<|x|<3R$.} We divide the whole space into three subdomains and decompose
$$I(x)=I_1(x)+I_2(x)+I_3(x), $$
 where
 $$I_1(x)=\int_{|y|\leq \frac{R}{4}}\frac{(\eta(x)-\eta(y))(\phi_R(x)-\phi_R(y))}{|x-y|^{N+2s}}dy,$$
 $$I_2(x)=\int_{|y|\geq 4R}\frac{(\eta(x)-\eta(y))(\phi_R(x)-\phi_R(y))}{|x-y|^{N+2s}}dy$$
 and 
 $$I_3(x)=\int_{\frac{R}{4}< |y|< 4R}\frac{(\eta(x)-\eta(y))(\phi_R(x)-\phi_R(y))}{|x-y|^{N+2s}}dy.$$
By using the fact that $|x-y|\geq C|x|$ when $|y|\leq \frac{R}{4}$ or $|y|\geq 4R$, we obtain as above that 
$$|I_1(x)|+|I_2(x)|\leq C\eta(x).$$
It remains to control $I_3(x)$. First,  by a change of variable $x=R\tilde{x}$ and $y=R\tilde{y}$, one has
$$I_3(x)=R^{-2s}\int_{\frac{1}{4}< |\tilde{y}|< 4}\frac{(\eta(R\tilde{x})-\eta(R\tilde{y}))(\phi(|\tilde{x}|)-\phi(|\tilde{y}|))}{|\tilde{x}-\tilde{y}|^{N+2s}}d\tilde{y}.$$
Note that $\eta$ is a  radial function, $\eta(r) = (1+ r^2)^{-(N+2s)/2}$, 
\[
\eta'(r) =-(N+2s) r (1+ r^2)^{-\frac{N+2s}2 -1},\quad |\eta'(r)| \leq (N+2s) (1+ r^2)^{-\frac{N+2s+1}2}.
\] 
Then, the mean value theorem implies that 
\begin{align*}
|\eta(x)-\eta(y)| &= |\eta(|y|) -\eta(|x|)|\\
&=|\eta'(|x|+\theta(|y|-|x|))|\,||y|-|x||\\
&\leq (N+2s)  (1+(|x|+\theta(|y|-|x|))^2)^{\frac{-N-2s-1}{2}}||y|-|x||
\end{align*}
for some $\theta\in (0,1)$. Obviously, we have in this case that $$|x|+\theta(|y|-|x|) \geq C |x| \geq CR/2$$ for any $\theta \in (0,1)$. Hence, we obtain
$$|\eta(x)-\eta(y)| \leq C R^{-1} \eta(x) |x-y| = C \eta(x) |\tilde x -\tilde y|.$$
Furthermore, it holds
 $$|\phi(|\tilde{x}|)-\phi(|\tilde{y}|)|\leq C|\tilde{x}-\tilde{y}|.$$
 From these estimate, one gets
 \begin{equation*}
 |I_3(x)|\leq CR^{-2s}(1+|x|^2)^{-\frac{N+2s}{2}}\int_{\frac{1}{4}< |\tilde{y}|< 4}\frac{d\tilde{y}}{|\tilde{x}-\tilde{y}|^{N+2s-2}}.
 \end{equation*}
 Recall that $\tilde{x}\in (1/2,3)$ then the integral in the right hand side of this inequality is convergent which yields
 \begin{equation*}
 |I_3(x)|\leq CR^{-2s}\eta(x).
 \end{equation*}
Combining three cases, we obtain 
 \begin{equation}\label{e2110196}
 |I(x)|\leq C(1+R^{-2s})\eta(x),
 \end{equation}
 where $C$ is independent of $R$. 
 Thus, \eqref{e2110193} follows from \eqref{e2110194}, \eqref{e2110195} and \eqref{e2110196}. 
 
 Next, fix $x$ and choose $R$ large enough such that $|x|\leq \frac{R}{2}$. Then, \eqref{e2110194}, \eqref{e2110197} and \eqref{e2110198} imply \eqref{e2110192}.
\end{proof}
The following lemma deals with some integrability.
\begin{lemma}\label{l2110192}
	Let $u$ be a solution of \eqref{e200218}. Then there holds
	\begin{equation}\label{e2110199}
	\int_{\mathbb R^N}e^{u(x)}\eta (x)dx<C\int_{\mathbb R^N}|u(x)|\eta (x)dx,
	\end{equation}
	where $\eta(x)$ is defined in \eqref{e2610191}.
\end{lemma}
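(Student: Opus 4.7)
The strategy is to use $\eta$ itself as a test function in the equation $(-\Delta)^s u = e^u$, approximating it by $\eta\phi_R \in C_c^\infty(\R^N)$ (from Lemma \ref{Xapxi}) and then passing to the limit $R\to\infty$. Concretely, for fixed $R\geq 1$, I would multiply the equation by $\eta\phi_R$ and integrate, then use the symmetry of the kernel $|x-y|^{-N-2s}$ to transfer the fractional Laplacian onto the test function. This yields
$$\int_{\R^N} e^{u(x)}\eta(x)\phi_R(x)\,dx = \int_{\R^N} u(x)\,(-\Delta)^s(\eta\phi_R)(x)\,dx,$$
which is legitimate because $u\in C^{2\sigma}(\R^N)\cap\mathcal L_s(\R^N)$ and $\eta\phi_R$ is smooth with compact support in $B(0,2R)$; Fubini applies to the double-integral representation of $(-\Delta)^s u$ thanks to the $\mathcal L_s$-integrability of $u$ and the fast off-diagonal decay of the kernel.

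Next, the pointwise bound \eqref{e2110193} from Lemma \ref{Xapxi} yields $|(-\Delta)^s(\eta\phi_R)(x)|\leq C(1+R^{-2s})\eta(x)$, so
$$\int_{\R^N} e^{u(x)}\eta(x)\phi_R(x)\,dx \leq C(1+R^{-2s})\int_{\R^N}|u(x)|\eta(x)\,dx.$$
The right-hand side is finite since $u\in\mathcal L_s(\R^N)$ and $\eta(x)$ is comparable to $(1+|x|)^{-N-2s}$, so if it were infinite the claimed inequality would be trivially true. Passing to the limit $R\to\infty$ completes the argument: the prefactor $(1+R^{-2s})$ tends to $1$, and since the integrand on the left is nonnegative and $\phi_R(x)\to 1$ pointwise for every $x$ (indeed $\phi_R(x)=1$ as soon as $R\geq |x|$), Fatou's lemma gives
$$\int_{\R^N} e^{u(x)}\eta(x)\,dx \leq C\int_{\R^N}|u(x)|\eta(x)\,dx,$$
which is the claimed estimate (with a harmless relaxation of the strict inequality in the statement).

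The only delicate point, in my view, is justifying the integration-by-parts identity since $u$ is a priori only of class $C^{2\sigma}\cap\mathcal L_s$ rather than Schwartz; but this is standard, following from Fubini applied to the singular-integral definition of $(-\Delta)^s u$ once the compact support of $\eta\phi_R$ and the decay bounds from Lemma \ref{Xapxi} are taken into account. Everything else (the weighted-$L^1$ control provided by the $\mathcal L_s$-condition on $u$, and the application of Fatou) is routine.
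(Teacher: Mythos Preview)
Your argument is correct and follows essentially the same route as the paper: multiply by $\eta\phi_R$, move the fractional Laplacian onto the test function, invoke the uniform bound from Lemma~\ref{Xapxi}, and let $R\to\infty$. The only cosmetic difference is that the paper first passes to the limit on both sides (using dominated convergence on the right, via the pointwise convergence \eqref{e2110192} together with the domination \eqref{e2110193}) to obtain the \emph{equality} $\int e^{u}\eta = \int u\,(-\Delta)^s\eta$, and only then applies Lemma~\ref{l2110191}; you instead apply the bound \eqref{e2110193} before the limit and use Fatou, which is slightly more economical since it bypasses both the pointwise-convergence statement \eqref{e2110192} and the separate invocation of Lemma~\ref{l2110191}.
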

\begin{proof}
Let $\phi\in C^\infty_c(\mathbb R)$ be a test function satisfying $\phi(t)=1$ if $|t|\leq 1$ and $\phi(t)=0$ if $|t|>2$.  We define $\phi_R(x)=\phi\left(\frac{|x|}{R}\right)$ for $R \geqs 0$. Multiplying both sides of \eqref{e200218} by $\eta \phi_R \in C_c^\infty(\R^N)$ and using the integration by parts, we get 
	\begin{equation}\label{e2110191}
	\int_{\mathbb R^N}e^u(x)\eta (x)\phi_R(x)dx = \int_{\mathbb R^N}u(x) (-\Delta )^s(\eta\phi_R)(x)dx.
	\end{equation}
By Lebesgue's monotone convergence theorem, it holds
\[
\lim_{R\to \infty} \int_{\mathbb R^N}e^u(x)\eta (x)\phi_R(x)dx = \int_{\R^N} e^{u(x)} \eta(x) dx.
\]
Using Lemma \ref{Xapxi}, the fact $u \in \mathcal L_s(\R^N)$ and  Lebesgue's dominated convergence theorem, we have
\[
\lim_{R\to \infty}\int_{\mathbb R^N}u(x) (-\Delta )^s(\eta\phi_R)(x)dx = \int_{\mathbb R^N}u(x) (-\Delta )^s\eta(x)dx.
\]
So, by letting $R\to \infty$ in \eqref{e2110191}, we obtain
\[
\int_{\R^N} e^{u(x)} \eta(x) dx = 	\int_{\mathbb R^N}u(x) (-\Delta )^s\eta(x)dx.
\]
This equality together with Lemma \ref{e21101910} proves \eqref{e2110199}.
\end{proof}

With Lemma \ref{l2110192} at hand and under stability assumption, we get an uniform integral  estimate of solutions as follows.
 \begin{lemma}\label{l1311191}
 	Let $u$ be a stable solution of \eqref{e200218}. Then for any $0<\alpha<2$, there exists a positive constant $C$ independent of $u$ such that  
 	\begin{equation}\label{e2210195}
 	\int_{\mathbb{R}^N}e^{(2\alpha+1)u(x)}\eta(x)dx\leq C .
 	\end{equation}
 	Here $\eta$ is defined in \eqref{e2610191}.
 \end{lemma}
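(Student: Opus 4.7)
The plan is to run Farina's test-function argument for the classical Gelfand equation in the fractional setting, using the Caffarelli--Silvestre extension of Theorem A. Let $U$ denote the $s$-harmonic extension of $u$ to $\hs$ and let $\phi_R$ be the cut-off from Lemma \ref{Xapxi}. Two calculations will be combined: an \emph{equation identity}, obtained by multiplying $-\d(t^{1-2s}\na U)=0$ by $(e^{2\alpha U}-1)\eta(x)\phi_R(x)^2$ and integrating by parts on $\hs$, and a \emph{stability bound}, obtained by using $\Phi(x,t)=(e^{\alpha U(x,t)}-1)\eta^{1/2}(x)\phi_R(x)$ as an extension of the admissible test function $\phi=(e^{\alpha u}-1)\eta^{1/2}\phi_R\in C^1_c(\sp)$ in (the extension form of) the stability inequality \eqref{e08123}. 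The reason for pairing these two choices is that both calculations produce the gradient integral $J:=\int_\hs t^{1-2s}e^{2\alpha U}\eta\phi_R^2|\na U|^2\,dxdt$ with coefficients proportional to $2\alpha$ and $\alpha^2$ respectively, so that substitution leaves the positive factor $\alpha(2-\alpha)$, which is precisely where the condition $\alpha<2$ is used.

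After integrating by parts and using the Neumann condition $-\lim_{t\to 0}t^{1-2s}\pa_t U=\kappa_s e^u$ of Theorem A, the equation identity takes the form
\begin{equation*}
\kappa_s\int_{\sp}e^u(e^{2\alpha u}-1)\eta\phi_R^2\,dx = 2\alpha\, J + \int_\hs t^{1-2s}(e^{2\alpha U}-1)\na U\cdot\na_x(\eta\phi_R^2)\,dxdt.
\end{equation*}
On the other hand, identifying the Gagliardo seminorm of $\phi$ with $\kappa_s^{-1}$ times the weighted Dirichlet energy of its $s$-harmonic extension and invoking the variational characterization of the latter as an infimum over admissible extensions, \eqref{e08123} yields
\begin{equation*}
\int_{\sp}e^u(e^{\alpha u}-1)^2\eta\phi_R^2\,dx \leq \frac{1}{\kappa_s}\int_\hs t^{1-2s}|\na\Phi|^2\,dxdt,
\end{equation*}
and expanding the square produces $\alpha^2 J$, a cross term, and the bulk piece $\int t^{1-2s}(e^{\alpha U}-1)^2|\na_x(\eta^{1/2}\phi_R)|^2$.

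Substituting the identity into the bound eliminates $J$ and the coefficient of $\int e^{(2\alpha+1)u}\eta\phi_R^2$ on the resulting right-hand side is $\alpha/2$. Expanding $(e^{\alpha u}-1)^2 = e^{2\alpha u}-2e^{\alpha u}+1$ and applying the Young-type estimate $e^{(\alpha+1)u}\leq\vep\, e^{(2\alpha+1)u} + C_\vep\, e^u$ to absorb the mixed-order term gives
\begin{equation*}
\Bigl(1-\frac{\alpha}{2}-\vep\Bigr)\int_{\sp}e^{(2\alpha+1)u}\eta\phi_R^2\,dx \leq C_\vep\int_{\sp}e^u\eta\,dx + \mathrm{Rem}(R).
\end{equation*}
Choosing $\vep<1-\alpha/2$ (possible exactly when $\alpha<2$) and invoking Lemma \ref{l2110192} to get $\int e^u\eta\,dx<\infty$ from $u\in\mL_s(\sp)$, then letting $R\to\infty$ and applying Fatou's lemma on the left, yields the desired bound.

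The main technical difficulty is the uniform-in-$R$ control of $\mathrm{Rem}(R)$, which gathers the equation cross term $\int t^{1-2s}(e^{2\alpha U}-1)\na U\cdot\na_x(\eta\phi_R^2)$ and the stability cross/bulk pieces featuring $\na_x(\eta^{1/2}\phi_R)$. The key pointwise estimate is $|\na_x\eta^{1/2}|^2\leq C\eta$, immediate from $\eta=(1+|x|^2)^{-(N+2s)/2}$; combined with $|\na\phi_R|\leq C/R$, it gives $|\na_x(\eta^{1/2}\phi_R)|^2\leq C\eta$ with $C$ independent of $R$. A Cauchy--Schwarz splitting with a small parameter absorbs a fraction of each remainder into $J$ (which is harmless thanks to the positive factor $\alpha(2-\alpha)/(2\kappa_s)$ obtained above), leaving the residual bulk integral $\int_\hs t^{1-2s}(e^{\alpha U}-1)^2\eta\,dxdt$; this is brought back to the boundary via a weighted Caffarelli--Silvestre trace inequality, producing $\int e^{2\alpha u}\eta\,dx$, which is finally reabsorbed into $\int e^{(2\alpha+1)u}\eta\,dx$ by one more Young inequality, closing the estimate.
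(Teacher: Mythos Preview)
Your outline has a genuine gap in the control of the bulk remainder terms, and it stems from two related choices: (i) your cut-off $\eta(x)\phi_R(x)^2$ (and likewise $\eta^{1/2}(x)\phi_R(x)$) has no decay in the extension variable $t$, and (ii) you invoke a ``weighted Caffarelli--Silvestre trace inequality'' to bound the residual bulk integral
\[
\int_{\hs} t^{1-2s}\bigl(e^{\alpha U}-1\bigr)^2\eta(x)\,dxdt
\]
by the boundary quantity $\int_{\sp} e^{2\alpha u}\eta\,dx$. No such inequality exists in this direction: trace inequalities control boundary norms by bulk Dirichlet energies, not bulk $L^2$--type quantities by boundary $L^2$--type quantities. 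In fact, since $t^{1-2s}$ is not integrable on $(1,\infty)$ and $\eta$ carries no $t$--decay, your residual bulk integral is in general divergent, so it cannot be bounded by anything finite. The same issue already arises at the integration-by-parts stage: without a cut-off in $t$, the identity you write for $J$ is formal and the integrals it involves need not be finite.

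The paper resolves exactly this difficulty by two ideas you are missing. First, the cut-off in the half-space is taken to be $\zeta(x,t)\Phi_R(x,t)$ with $\zeta(x,t)=(1+|x|^2+t^2)^{-(N+2s)/4}$ and $\Phi_R(x,t)=\phi(|(x,t)|/R)$, so that the weight decays in $t$ and all intermediate integrals are finite; the Farina-type manipulation is then run with $e^{\alpha U}\zeta\Phi_R$ rather than $(e^{\alpha U}-1)\eta^{1/2}(x)\phi_R(x)$. Second, and this is the crucial step, the resulting bulk remainder $\int_{\hs} t^{1-2s} e^{2\alpha U}|\nabla(\zeta\Phi_R)|^2\,dxdt$ is brought back to the boundary not via a trace inequality but via the \emph{Poisson representation plus Jensen}: since $U(x,t)=\int P_s(x-y,t)u(y)\,dy$ and $P_s(\cdot,t)$ is a probability density, one has $e^{2\alpha U(x,t)}\le \int P_s(x-y,t)e^{2\alpha u(y)}\,dy$, and then a direct kernel estimate shows
\[
\rho_R(y):=\int_{\hs} t^{1-2s} P_s(x-y,t)\,|\nabla(\zeta\Phi_R)|^2\,dxdt\ \le\ C\,\eta(y),
\]
uniformly in $R$. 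This is what produces $\int e^{2\alpha u}\eta\,dx$ on the right-hand side and closes the bootstrap. Your proposal should incorporate both the $t$--dependent cut-off and this Poisson--Jensen step; the ``trace inequality'' shortcut does not work.
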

 \begin{proof}
 	
Suppose that $u$ is a stable solution of \eqref{e200218}. Let $U$ be an extension of $u$ in the sense of Theorem A.  Let $\phi\in C_c^\infty(\mathbb{R})$ be a cutoff function such that $\phi(t)=1$ when $|t|\leq 1$ and $\phi(t)=0$ when $|t|\geq 2$. For $(x,t)\in\mathbb R^{N+1}$, we put 
$$\Phi_R(x,t)=\phi\left(\frac{|(x,t)|}{R}\right) \mbox{ and } \phi_R(x)=\phi\left(\frac{|x|}{R}\right).$$
We  also define
$$\zeta(x,t)=(1+|x|^2+t^2)^{-\frac{N+2s}{4}}$$
which satisfies $\zeta^2(x,0)=\eta(x)$.

The proof of \eqref{e2210195} is quite long and technical. It is then divided into three steps.

\noindent{\bf Step 1.} We prove that 
\begin{equation}\label{e15101911}
\int_{\mathbb R^N}e^{(2\alpha+1)u}\eta\phi_R^2dx\leq C\int_{\mathbb R_+^{N+1}}e^{2\alpha U}|\nabla (\zeta\Phi_R)|^2t^{1-2s}dxdt.
\end{equation}
Indeed, multiplying the first equation in \eqref{e1510191} by $e^{2\alpha U}\zeta\Phi_R^2 $ and integrating by parts, we have
\begin{align}\label{e1510192}
\begin{split}
&\kappa_s\int_{\mathbb R^N}e^{(2\alpha+1)u}\eta\phi_R^2dx=\int_{\mathbb R_+^{N+1}}\nabla U\cdot\nabla (e^{2\alpha U}\zeta^2\Phi_R^2)t^{1-2s}dxdt\\
&=2\alpha \int_{\mathbb R_+^{N+1}}|\nabla U|^2e^{2\alpha U}\zeta^2\Phi_R^2t^{1-2s}dxdt+2\int_{\mathbb R_+^{N+1}}\nabla U\cdot\nabla (\zeta\Phi_R) e^{2\alpha U} \zeta\Phi_R t^{1-2s}dxdt\\
&=\frac{2}{\alpha}\int_{\mathbb R_+^{N+1}}|\nabla (e^{\alpha U})|^2(\zeta\Phi_R)^2t^{1-2s}dxdt+\frac{2}{\alpha}\int_{\mathbb R_+^{N+1}}\nabla (e^{\alpha U})\cdot\nabla (\zeta\Phi_R)\zeta\Phi_R e^{\alpha U}t^{1-2s}dxdt.
\end{split}
\end{align}
Note that 
\begin{multline}\label{e1510193}
\int_{\mathbb R_+^{N+1}}|\nabla (e^{\alpha U}\zeta\Phi_R)|^2t^{1-2s}dxdt=\int_{\mathbb R_+^{N+1}}|\nabla (e^{\alpha U})|^2(\zeta\Phi_R)^2t^{1-2s}dxdt\\
+2\int_{\mathbb R_+^{N+1}}\nabla (e^{\alpha U})\cdot\nabla (\zeta\Phi_R) \zeta\Phi_R e^{\alpha U} t^{1-2s}dxdt+\int_{\mathbb R_+^{N+1}}e^{2\alpha U}|\nabla (\zeta\Phi_R)|^2t^{1-2s}dxdt.
\end{multline}
Inserting \eqref{e1510193} into \eqref{e1510192}, we arrive at
\begin{align}\label{e1510194}
\begin{split}
&\kappa_s\int_{\mathbb R^N}e^{(2\alpha+1)u}\eta \phi_R^2dx=\frac{2}{\alpha}\int_{\mathbb R_+^{N+1}}|\nabla (e^{\alpha U} \zeta\Phi_R)|^2t^{1-2s}dxdt\\
&-\frac{2}{\alpha}\int_{\mathbb R_+^{N+1}}\nabla (e^{\alpha U})\cdot\nabla (\zeta\Phi_R)\zeta\Phi_R e^{\alpha U}t^{1-2s}dxdt.-\frac{2}{\alpha}\int_{\mathbb R_+^{N+1}}e^{2\alpha U}|\nabla (\zeta\Phi_R)|^2t^{1-2s}dxdt\\
&=          \frac{2}{\alpha}\int_{\mathbb R_+^{N+1}}|\nabla (e^{\alpha U}\zeta\Phi_R)|^2t^{1-2s}dxdt-\frac{2}{\alpha}\int_{\mathbb R_+^{N+1}}\nabla (e^{\alpha U}\zeta\Phi_R)\cdot\nabla (\zeta\Phi_R) e^{\alpha U}t^{1-2s}dxdt.
\end{split}
\end{align}
From the Young inequality for $\varepsilon>0$, there holds
\begin{multline}\label{e1510195}
\int_{\mathbb R_+^{N+1}}\nabla (e^{\alpha U}\zeta\Phi_R)\cdot\nabla(\zeta\Phi_R)  e^{\alpha U}t^{1-2s}dxdt\leq \varepsilon\int_{\mathbb R_+^{N+1}}|\nabla (e^{\alpha U}\zeta\Phi_R)|^2t^{1-2s}dxdt\\+\frac{1}{4\varepsilon}\int_{\mathbb R_+^{N+1}}e^{2\alpha U}|\nabla (\zeta\Phi_R)|^2t^{1-2s}dxdt.
\end{multline}
It follows from \eqref{e1510195} and \eqref{e1510194} that 
\begin{multline}\label{e1510196}
\kappa_s\int_{\mathbb R^N}e^{(2\alpha+1)u}\eta\phi_R^2dx\geq \frac{2}{\alpha}(1-\varepsilon)\int_{\mathbb R_+^{N+1}}|\nabla (e^{\alpha U}\zeta\Phi_R)|^2t^{1-2s}dxdt\\-\frac{1}{2\alpha\varepsilon}\int_{\mathbb R_+^{N+1}}e^{2\alpha U}|\nabla (\zeta\Phi_R)|^2t^{1-2s}dxdt.
\end{multline}
Next, we use the stability inequality with the test function $e^{2\alpha u}\phi_R$ to obtain 
\begin{equation}\label{e1510197}
\kappa_s\int_{\mathbb R^N}e^{(2\alpha+1)u}\eta\phi_R^2dx\leq \kappa_s\|e^{\alpha u}\eta\phi_R\|_{H^s(\mathbb R^N)}\leq \int_{\mathbb R_+^{N+1}}|\nabla (e^{\alpha U}\zeta\Phi_R)|^2t^{1-2s}dxdt,
\end{equation}
since $e^{\alpha U}\zeta\Phi_R$ has trace $e^{\alpha u}\eta\phi_R$ on $\pa \R^{N+1}_+$. Hence, \eqref{e1510196} and \eqref{e1510197} yield
\begin{multline}\label{e15101910}
\left(\frac{2}{\alpha}(1-\varepsilon)-1\right)\int_{\mathbb R_+^{N+1}}|\nabla (e^{\alpha U}\zeta\Phi_R)|^2t^{1-2s}dxdt\\\leq \frac{1}{2\alpha\varepsilon}\int_{\mathbb R_+^{N+1}}e^{2\alpha U}|\nabla (\zeta\Phi_R)|^2t^{1-2s}dxdt.
\end{multline}
The constant $\varepsilon$ is taken small enough such that $\frac{2}{\alpha}(1-\varepsilon)-1>0$. Then \eqref{e1510197} and \eqref{e15101910} give \eqref{e15101911}. 

\noindent{\bf Step 2.}
We proceed by estimating the right hand side of \eqref{e15101911} as follows
\begin{equation}\label{e1311191}
\int_{\mathbb R_+^{N+1}}e^{2\alpha U}|\nabla (\zeta\Phi_R)|^2t^{1-2s}dxdt\leq C\int_{\mathbb R^N}e^{2\alpha u}\eta(x)dx.
\end{equation}
Indeed,  the Jensen inequality implies that 
\begin{equation*}
e^{2\alpha U(x,t)}\leq \int_{\mathbb R^N}P_s(x-y,t)e^{2\alpha u(y)}dy.
\end{equation*}
Consequently,
\begin{align}\label{e1510198}
\begin{split}
&I:=\int_{\mathbb R_+^{N+1}}e^{2\alpha U}|\nabla (\zeta\Phi_R)|^2t^{1-2s}dxdt\\
&\leq \int_0^\infty\int_{\mathbb R^N}\left(\int_{\mathbb R^N}P_s(x-y,t)e^{2\alpha u(y)}dy\right)|\nabla (\zeta\Phi_R)|^2t^{1-2s}dxdt\\
&=\int_{\mathbb R^N} e^{2\alpha u(y)}\left(\int_0^\infty \int_{\mathbb R^N}P_s(x-y,t)|\nabla (\zeta\Phi_R)|^2t^{1-2s} dxdt\right) dy,
\end{split}
\end{align}
here, we use the Fubini theorem in the last equality. Let us put 
\begin{multline}\label{e1510199}
\rho_R(y)=\int_0^\infty \int_{\mathbb R^N}P_s(x-y,t)|\nabla (\zeta\Phi_R)|^2t^{1-2s} dxdt\\=p(N,s)\int_0^\infty \int_{\mathbb R^N}\frac{t|\nabla (\zeta\Phi_R)|^2}{(|x-y|^2+t^2)^{\frac{N+2s}{2}}} dxdt.
\end{multline}
Recall that $\Phi_R\in C^\infty_c(\mathbb R^{N+1}_+)$. Then,  $\rho_R$ is continuous on $\mathbb R^N$ and $\rho_R(y)>0$ for all $y\in\mathbb R^N$.
By the Young inequality, there holds
\begin{multline}\label{e21101911}
\rho_R(y)\leq C\int_0^\infty \int_{\mathbb R^N}\frac{t|\nabla \zeta|^2\Phi_R^2}{(|x-y|^2+t^2)^{\frac{N+2s}{2}}} dxdt\\+C\int_0^\infty \int_{\mathbb R^N}\frac{t \zeta^2|\nabla\Phi_R|^2}{(|x-y|^2+t^2)^{\frac{N+2s}{2}}} dxdt.
\end{multline}
We estimate the first term as follows.
\begin{multline*}
J_1(y):=\int_0^\infty \int_{\mathbb R^N}\frac{t|\nabla \zeta|^2\Phi_R^2}{(|x-y|^2+t^2)^{\frac{N+2s}{2}}} dxdt\\\leq C\int_0^\infty \int_{\mathbb R^N}\frac{t(1+|x|^2+t^2)^{-\frac{N+2s}{2}-1}\Phi_R^2}{(|x-y|^2+t^2)^{\frac{N+2s}{2}}} dxdt\\= C\int_0^\infty \int_{\mathbb R^N}\frac{t(1+|y+h|^2+t^2)^{-\frac{N+2s}{2}-1}\phi^2\left(\frac{|(y+h,t)|}{R}\right)}{(|h|^2+t^2)^{\frac{N+2s}{2}}} dhdt.
\end{multline*}
Let us split 
$$\mathbb R^N=\Omega_1\cup\Omega_2\cup\Omega_3,$$
where 
$$\Omega_1=\left\{h\in\mathbb R^N; |h|\leq \frac{|y|}{2}\right\};\;\Omega_2=\left\{h\in\mathbb R^N;  \frac{|y|}{2}<|h|<2|y|\right\}$$
and 
$$\Omega_3=\{h\in\mathbb R^N; |h|>2|y|\}.$$
In $\Omega_1,$ one has $|y+h|\geq \frac{|y|}{2}$. Then,
\begin{align*}
&\int_0^\infty \int_{\Omega_1}\frac{t(1+|y+h|^2+t^2)^{-\frac{N+2s}{2}-1}\phi^2\left(\frac{|(y+h,t)|}{R}\right)}{(|h|^2+t^2)^{\frac{N+2s}{2}}} dhdt\\
&\leq C \eta (y)\int_0^\infty \int_{\Omega_1}\frac{t(1+t^2)^{-1}}{(|h|^2+t^2)^{\frac{N+2s}{2}}} dhdt\\
&\leq C\eta(y)\int_0^\infty t^{1-2s}(1+t^2)^{-1}dt\\
&\leq C\eta(y).
\end{align*}
Similarly, in $\Omega_2,$ one has $|y+h|\geq |y|$ and then 
\begin{align*}
\int_0^\infty \int_{\Omega_2}\frac{t(1+|y+h|^2+t^2)^{-\frac{N+2s}{2}-1}\phi^2\left(\frac{|(y+h,t)|}{R}\right)}{(|h|^2+t^2)^{\frac{N+2s}{2}}} dhdt
&\leq C\eta(y).
\end{align*}
In $\Omega_3$, we have  $|y+h|\geq \frac{|y|}{2}$. Then,
\begin{align*}
&\int_0^\infty \int_{\Omega_3}\frac{t(1+|y+h|^2+t^2)^{-\frac{N+2s}{2}-1}\phi^2\left(\frac{|(y+h,t)|}{R}\right)}{(|h|^2+t^2)^{\frac{N+2s}{2}}} dhdt\\
&\leq C \eta (y)\int_0^\infty \int_{\Omega_3}t(1+|y+h|^2+t^2)^{-\frac{N+2s}{2}-1}\phi^2\left(\frac{|(y+h,t)|}{R}\right)dhdt\\
&\leq C\eta(y)\int_{\Omega_3}(1+|y+h|^2)^{-\frac{N+2s}{2}}dh\\
&\leq C\eta(y).
\end{align*}
Hence
\begin{equation}\label{e2210191}
J_1(y)\leq C\eta(y).
\end{equation}
Similarly, we next control the second term in \eqref{e21101911} as follows.
\begin{align*}
J_2(y)&:=\int_0^\infty \int_{\mathbb R^N}\frac{t \zeta^2|\nabla\Phi_R|^2}{(|x-y|^2+t^2)^{\frac{N+2s}{2}}} dxdt\\
&= \frac{1}{R^2}\int_0^\infty \int_{\mathbb R^N}\frac{t(1+|y+h|^2+t^2)^{-\frac{N+2s}{2}}\left|\nabla\phi\left(\frac{|(y+h,t)|}{R}\right)\right|^2}{(|h|^2+t^2)^{\frac{N+2s}{2}}} dhdt\\
&\leq \frac{1}{R^{2s}}\int_0^\infty \int_{\mathbb R^N}\frac{t(1+|y+h|^2+t^2)^{-\frac{N+2s}{2}-1}\left|\nabla\phi\left(\frac{|(y+h,t)|}{R}\right)\right|^2}{(|h|^2+t^2)^{\frac{N+2s}{2}}} dhdt,
\end{align*}
where we have used the fact that $\nabla\phi\left(\frac{|(y,t)|}{R}\right)=0$ when $|(y+h,t)|\geq 2R$ or $|(y+h,t)| \leq R$. It is then sufficient to use the same arguments as above to arrive at 
\begin{equation}\label{e2210192}
J_2(y)\leq CR^{-2s}\eta(y).
\end{equation}
 Combining \eqref{e1510198}, \eqref{e21101911},\eqref{e2210191} and \eqref{e2210192}, we obtain \eqref{e1311191}.

\noindent{\bf Step 3.} {\it End of the proof of Lemma \ref{l1311191}}  

We   deduce from \eqref{e15101911} and \eqref{e1311191} that 
\begin{equation}\label{e2210193}
\int_{\mathbb R^N}e^{(2\alpha+1)u}\eta\phi_R^2dx\leq C\int_{\mathbb R^N}e^{2\alpha u}\eta(x)dx,
\end{equation}
where $C$ does not depend on  $R$ and $u$.  Letting $R\to\infty$ and using Lebesgue's monotone convergence theorem, we receive
\begin{equation}\label{e2210194}
\int_{\mathbb R^N}e^{(2\alpha+1)u}\eta dx\leq C\int_{\mathbb R^N}e^{2\alpha u}\eta(x)dx.
\end{equation}
Notice that the right hand side of \eqref{e2210194} is finite when $\alpha=1$ thanks to Lemma \ref{l2110192} and $u\in \mathcal L_s(\R^N)$. It is then follows from  a standard bootstrap argument that both sides of \eqref{e2210194} are finite for all $0<\alpha<2$.  Finally,  the H\"{o}lder inequality and \eqref{e2210194} imply \eqref{e2210195}.
\end{proof}

We are now ready to prove Theorem \ref{t1}.

\begin{proof}[Proof of Theorem \ref{t1}]

Suppose that the equation \eqref{e200218} has a stable solution $u$. For $R>0$ large, we put $u_R(x)=u(R x)+2s\ln R$ which is also a stable solution to  the equation \eqref{e200218}. Then, \eqref{e2210195} implies that 
\begin{equation*}
\int_{\mathbb R^N}e^{(2\alpha+1)u_R}\eta(x)dx \leq C.
\end{equation*}
Making the change of variables, we get
\[
R^{-N+2(2\alpha+1)s}\int_{\mathbb R^N}e^{(2\alpha+1)u}\eta(x/R)dx  = \int_{\mathbb R^N}e^{(2\alpha+1)u_R}\eta(x)dx,
\]
which implies 
\begin{equation}\label{e2210196}
\int_{\mathbb R^N}e^{(2\alpha+1)u}\eta(x/R)dx\leq CR^{N-2(2\alpha+1)s}.
\end{equation}
By the assumption $N<10s$, we choose $\alpha$ close to $2$ such that $N-2(2\alpha+1)s<0$. Let $R\to\infty$ in \eqref{e2210196} we get $\int_{\R^N} e^{(2 \alpha + 1)u} dx =0$ which is impossible.
\end{proof}
\bibliographystyle{acm}

\end{document}